\pgfplotsset{width=10cm,compat=1.9}
\numberwithin{equation}{section}
\newcommand{\udef}{\mathrel{\mathop:}=}
\newcommand{\R}{\mathbb{R}}
\newcommand{\N}{\mathbb{N}}
\newcommand{\de}{\mathrm{d}}
\newcommand{\norm}[1]{\left\| #1 \right\|}
\theoremstyle{plain}
\newtheorem{thm}{Theorem}[section]
\newtheorem{lem}[thm]{Lemma}
\newtheorem{exm}[thm]{Example}
\let\tilde\widetilde
\let\hat\widehat
\newcommand{\parder}[2]{\frac{\partial#1}{\partial#2}}
\newcommand{\der}[2]{\frac{\de#1}{\de#2}}
\newcommand{\xx}{\mathbf{x}}
\renewcommand{\ss}{\scriptstyle}
\def\svdots{\vbox{\baselineskip=1.5pt\lineskiplimit=0pt
	\kern1.5pt \hbox{$\ss .$}\hbox{$\ss .$}\hbox{$\ss .$}}}
\begin{document}

\title{A  Numerical Method for a Nonlocal Form of Richards' Equation Based on Peridynamic Theory}
\author[Berardi]{Marco Berardi}
\address{Istituto di Ricerca sulle Acque, Consiglio Nazionale delle Ricerche, Via F. de Blasio 5, 70132 Bari, Italy}
\email{marco.berardi@ba.irsa.cnr.it}
\author[Difonzo]{Fabio V. Difonzo}
\address{Dipartimento di Matematica, Universit\`a degli Studi di Bari Aldo Moro, Via E. Orabona 4, 70125 Bari, Italy}
\email{fabio.difonzo@uniba.it}
\author[Pellegrino]{Sabrina F. Pellegrino}
\address{Dipartimento di Management, Finanza e Tecnologia, Universit\`a LUM Giuseppe Degennaro, S.S. 100 Km 18, 70010 Casamassima,Italy}
\email{pellegrino@lum.it}

\subjclass{65M70, 42B30}

\keywords{Richards' equation, Peridynami, Nonlocal Model, Spectral Numerical Method}


\begin{abstract}
Forecasting water content dynamics in heterogeneous porous media has significant interest in hydrological applications; in particular, the treatment of infiltration when in presence of  cracks and fractures can be accomplished resorting to peridynamic theory, which allows a proper modeling of non localities in space. In this framework, we make use of Chebyshev transform on the diffusive component of the equation and then we integrate forward in time using an explicit method. We prove that the proposed spectral numerical scheme provides a solution converging to the unique solution in some appropriate Sobolev space. We finally exemplify on several different soils, also considering a sink term representing the root water uptake.
\end{abstract}

\maketitle

\pagestyle{myheadings}
\thispagestyle{plain}
\markboth{M. BERARDI, F.V. DIFONZO, S.F. PELLEGRINO}{PERIDYNAMIC RICHARDS' EQUATION}

\maketitle

\section{Introduction}

Environmental protection and related sustainability management policies demand a thorough understanding of complex coupling between hydrology, soil sciences, ecology, agronomy, atmospheric sciences, calling for deeper mathematical modeling and numerical methods able to deal with the multiphysics processes involved in these environmental phenomena. In particular, flow processes in unsaturated media have to be studied for a better understanding of the whole water cycle; a correct managing of irrigation needs relies, for instance, on robust numerical solvers for unsaturated flows with root water uptake (see for instance, \cite{Difonzo_Masciopinto_Vurro_Berardi,Romashchenko_Irrigation_science_2021}), or it is the basis for forecasting contaminant transport in the vadose zone (see for instance \cite{Simunek_VanGenuchten_2016}). Classical local advection-diffusion equations in porous media often fail to describe accurately such complex phenomena.\\
The idea of incorporating non-local behaviors in standard unsaturated flow models is gaining interest in recent times. Besides non-localities in space, which are the focus of this paper, also non-local effects in time can be considered, that generally account for memory terms in the advection-diffusion equations: in some pioneering works in the early '60s
\cite{Rawlins_Gardner_1963}) it had been already noticed that diffusivity depends not only on water content, but also explicitly on time, and this argument has been then extended also to hydraulic conductivity (see \cite{Guerrini_Swartzendruber_SSSAJ_1992}; later on a model, in which derivative of water content on time is fractional, has been first proposed in \cite{Pachepsky_et_al_JoH_2003} and then generalized in \cite{Kavvas_et_al_HESS_2017}. A memory component has been observed also when modeling water stress in the root water uptake: the experimental evidence of such "ecological memory" of plant roots has been noticed, for instance, in \cite{Wu_BenGal_et_alAGWAT_2020,Carminati_VZJ_2012} and has been recently formalized in \cite{Berardi_Girardi_memory_2022}.
\medskip

When dealing with spatial discontinuities or significant  heterogeneities, classical local formulations of flow and transport phenomena present severe limitations; for instance, in some cases, standard unsaturated flow models can not forecast correctly water dynamics; as reported in \cite{neuweilerEtAl2012}, when modeling fast infiltration processes (for instance infiltration after a heavy rainfall event), "first arrival time at the groundwater [...] are often underpredicted" because of preferential flow paths. These preferential flows can be ascribed to non-equilibrium of water pressure at a local scale. As a matter of facts, there is an experimental evidence that pore structures in natural soils dynamically change due to alternating swelling and shrinkage processes (see for instance \cite{Coppola_et_al_VZJ_2015}): this phenomenon can be described by a dual permeability approach, by which the bulk porous medium consists of two dynamic interacting pore domains: (i) the fracture (from shrinkage) pore domain and (ii) the aggregate (interparticles plus structural pores), respectively (see \cite{Coppola_et_al_WRR_2012}): in practice, two different unsaturated flow equations are considered in each part of this domain. Analogously, in the context of solute transport, the solute exchange between mobile and immobile water has been modeled by a delay term in \cite{Masciopinto_Passarella_2018}, and, in a computational framework, this approach has been  implemented in \cite{Municchi_DiPasquale_Dentz_Icardi_CPC_2021}.
\bigskip

More in detail, multirate mass transfer is modeled assuming advection-diffusion on the fast mobile continuum and only diffusion in the slow immobile continuum: after solving analytically the diffusion model, the consequent fast domain model results non-local in time (\cite{Carrera_et_al_1998}). In this dual-continuum framework, the pioneering work \cite{neuweilerEtAl2012} shows that the linearization of the nonlinear diffusion equation, governing capillary flow in the slow continuum, ensures a good description of the averaged water content dynamics in the slow domain: therefore, they derive a non-local Richards' equation in the mobile domain, endowed with a memory kernel encoding mass transfer dynamics of the slow domains. \\
From the viewpoint of applications, in this context, dessication cracks impact the efficiency of irrigation and provokes
a fast leakage of nutrients and contaminants below the root zone into the groundwater. Even worst is the effect of such cracks into an earth dam, since it may lead to the failure of the dam itself. \\
On the other hand, several efforts have been accomplished towards a comprehensive modeling and efficient numerical solvers for such nonlocal problems. In \cite{yan2020} the coupling  of peridynamic formulation of chemical
transport with water flow is proposed in the unsaturated context, and an implicit numerical solver is implemented, and tested over different case studies, in order to show the ability of the model to recognize discontinuities and heterogeneities, including stationary cracks,
propagating cracks, and randomly distributed permeable and impermeable inclusions. In \cite{Delgoshaie_et_al_JoH_2015} authors discuss how a single continuum model can properly catch the contributions from
all the flow paths only if the control volume (i.e. the computational cell) is much larger than the longest connections between pores: therefore a non-local model is proposed therein, showing that if the longest connections are much smaller than the size of the control volume, these models converge to Darcy's law. A significant work has be presented in \cite{Ni_et_al_CMAME_2020}, in which the peridynamic theory is employed for simulating hydraulic fracture propagation in saturated porous media, and it is coupled with FEM for forecasting fluid flow therein. In this paper we aim at introducing a tailored numerical method for the corresponding peridynamic model of Richards' equation describing the unsaturated flow; for the sake of clarity we should say that peridynamic theory was introduced by Silling in~\cite{SILLING2000} as a nonlocal version of elasticity theory, for modeling long-range interactions occurring in real materials, ruling several phenomena like fractures, instabilities and cracks. In general, peridynamic models consist of an integro-differential equation not involving spatial derivatives and describe the motion of a material body subjected to external loading conditions.
The theory prescribes the existence of a domain influence, called horizon, which represents a measure of the nonlocality of the model and defines the range of interactions between material particles.

 In this framework, the remaining of the paper is structured as follows. In Section \ref{sec:periModel} an introduction to nonlocal framework and a peridynamic formulation of Richards' equation is given, with all the necessary assumptions to justify the current setting. Then, in Section \ref{sec:numericalMethod}, we propose a numerical method to integrate forward in time a semi-discretized version of the equation, leveraging spectral theory and Chebyshev transform properties to prove convergence results of the discretized solution to the exact one. The implementation of Chebyshev collocation method provides a good accuracy and does not require to impose periodic boundary conditions. Finally, in Section \ref{sec:numericalSimulations} we exemplify on different soils with several type of Dirichlet boundary conditions to support our findings.

\subsection{A short overview on Richards' equation}

It is well known that Richards' equation is a mass conservation law in terms of the volumetric moisture content $\theta$ and of the soil matric head $h_m$ defined on some compact domain $\Omega\subseteq\R^3$, coupled with the Buckingham-Darcy's law for the description on the flux:
\begin{align*}
\parder{\theta}{t}(\xx,t) &= -\nabla q(\xx,t)+S(\xx,\theta),\quad\xx\in\Omega \\
q(\xx,t) &= -K(h_m)\nabla(h_m-z),
\end{align*}
where $z$ is the elevation component of the space variable $\xx$,  $\theta$ represents the volumetric water content, $K$ is the so called hydraulic conductivity and $S(\xx,\theta)$ is a source or sink term describing, for instance, the root water uptake.
Thus, Richards' equation reads as
\begin{equation}\label{eq:Richards}
\parder{\theta}{t}(\xx,t)-\nabla\left(K(h_m)\nabla(h_m-z)\right)=S(\xx,\theta),\quad \xx\in\Omega,\,t\in[0,T],
\end{equation}
endowed with suitable initial and boundary conditions.

With the hypothesis that air pressure in the pores is constant, Richards' equation assumes that matric head at a given location is in equilibrium and that there exists a bijective function relating $\theta$ with $h_m$, called water retention curve (see \cite{neuweilerEtAl2012}), which is generally defined according to empirical functions. Moreover, for Richards' equation to be well posed, $K$ must be smooth enough to guarantee existence and uniqueness of solutions, also in case of heterogeneous soils with smooth boundary (see \cite{berardiDifonzoEFM2020} and references therein). In particular, hereafter and through the whole paper, $K$ and $h_m$ will be assumed to be locally Lipschitz on their respective domains.  \\
However, in case of desiccation cracks or anisotropic soils could affect well-posedness of Richards' equation \eqref{eq:Richards} and prevent existence of any solution. An alternative approach has been proposed in \cite{JabakhanjiMohtar2015}, where theory of elasticity for solid mechanics has been applied to unsaturated, heterogeneous, anisotropic soils. In this case, though, the flow density function depends on the position, matric head or moisture content, instead of the relative distance and relative displacement \cite{SILLING2000,SILLING2010}.
\\
The numerous numerical issues arising when solving Richards' equation in a computational framework rely mainly in its nature of highly nonlinear degenerate elliptic parabolic PDE. Here we just mention some significant references for the main numerical problems arising in Richards' equation. For instance, since implicit methods are generally used for time integration, the arising nonlinear problems have been studied with different methods, such as Newton's (e.g. \cite{Casulli_Zanolli_SIAM_2010,Bergamaschi_Putti}, Picard (\cite{Celia_et_al}), L-Scheme or its variants (\cite{Pop_JCAM_2004,Mitra_Pop_CAMWA_2019}). Even richer is the literature on spatial discretization techniques, for which we refer to \cite{Arbogast_Wheeler_1996,Li_Farthing_Miller_ADWR_2007,Manzini_Ferraris_ADWR_2004,Kees_Farthing_Dawson_CMAME_2008} and references therein. As regards numerical integration over layered discontinuous geological formations, a domain decomposition approach is followed in \cite{Seus_Mitra_Pop_Radu_Rohde_2018}, while a transversal method of lines is adopted in \cite{Berardi_Difonzo_Vurro_Lopez_ADWR_2018,Berardi_Difonzo_Lopez_CAMWA_2020}. \\
In this paper, we are looking at the 1D version of \eqref{eq:Richards} equipped with initial and Dirichlet boundary conditions, in which diffusion evolves exclusively along the depth, so that $\Omega=[0,Z]$ for some $Z>0$, and the forcing term $S$ only depends on $z\in[0,Z]$. Thus, one considers
\begin{equation}\label{eq:Richards1D_IBVC}
\parder{\theta}{t}(z,t)-\parder{}{z}\left(K(h_m)\parder{}{z}(h_m-z)\right) =
S(z),\quad z\in[0,Z],\,t\in[0,T].
\end{equation}

\section{Peridynamic Model: assumptions and derivation}\label{sec:periModel}

Let us consider a compact domain $\Omega$ with smooth boundary and let us define
\begin{equation}\label{eq:horizon}
B_\delta(z)\udef\{z'\in\Omega\,:\,\|z'-z\|\leq\delta\},
\end{equation}
the \emph{horizon} of $z$ of radius $\delta>0$. We assume that moisture dynamics at $z$ is only affected by pairwise interaction with $z'\in B_\delta(z)$; points outside the horizon of $z$ do not contribute to any dynamics therein.

The model is built on the concept of peripipes. Given any $z\in\Omega$, we assume that for each $z'\in B_\delta(z)$ there exists a fictitious pipe, called \emph{peripipe}, connecting every $z$ to $z'$.
We assume that the following requirements hold for any peripipe (see \cite{jabakhanjiphd2013}):
\begin{enumerate}
    \item Moisture is stored at the endpoints $z,x'$ of a peripipe, and zero moisture content is located along a peripipe;
    \item moisture flows in the direction of the peripipe and no transversal flux crosses its boundaries
    \item a peripipe is purely resistive, it has zero reactance and its response is proportional to $H(z)-H(z')$;
    \item a peripipe has uniform conductivity;
    \item peripipe conductivity is function of medium conductivity at its endpoints;
    \item the length of a peripipe is $\|z-z'\|$;
    \item peripipe response may also depend on the its length.
\end{enumerate}

Following \cite{JabakhanjiMohtar2015} and requirements above, we assume that the rate of volumetric moisture flow from a point $z'$ to a point $z$ per unit volume of $z$ and per unit volume of $z'$ is given by
\begin{equation}\label{eq:J}
J(z,z',t)=C(z,z')(H(z',t)-H(z,t)),
\end{equation}
where $C(z,z')$ is the peridynamic hydraulic conductance density and $H(z,t)$ is the total hydraulic potential, defined as
\[
H(z,t)=h_m(z,t)+z.
\]
Hereafter, for the sake of readability, we omit time dependence, unless required by the context.

The peripipe conductance depends on the peridynamic hydraulic conductivity $\kappa(z',z)$, which is an intrinsic material property (related to the classical hydraulic conductivity $K$), in the following way:
\begin{equation}\label{eq:C}
C(z,z')=\frac{\kappa(z',z)}{\|z-z'\|},
\end{equation}
where
\begin{equation}\label{eq:kappa}
\kappa(z,z')\udef K\varphi(z-z').
\end{equation}
The function $\varphi(z-z')$ is the so-called \emph{influence function}, representing a convolution kernel relating the horizon \eqref{eq:horizon} with the nature of boundary conditions assigned to \eqref{eq:Richards1D_IBVC}. The shape of such an even function and the way to select it turns out to be crucial, as we will see in Section \ref{sec:influenceFunction}.

Therefore, the changes of moisture stored at $z$ and at $z'$, mediated by the peripipe $zz'$, are given by
\begin{align*}
\Delta V_m(z,z') &= \kappa(z',z) \frac{H(z')-H(z)}{\|z-z'\|}\de V_{z'}\de V_z, \\
\Delta V_m(z',z) &= \kappa(z,z') \frac{H(z)-H(z')}{\|z'-z\|}\de V_{z}\de V_z'.
\end{align*}
As an immediate consequence it must hold $\kappa(z,z')=\kappa(z',z)$. \\
In case of inhomogeneous soils in unsaturated regime, above relations could be leveraged to define a peridynamic conductivity density by setting
\begin{equation}\label{eq:peryKappa}
\kappa(z,z')\udef\frac{\kappa(z)+\kappa(z')}{2},
\end{equation}
where $\kappa(z)\equiv\kappa(z,0)$, as proposed in \cite{yan2020,JabakhanjiMohtar2015}. \\
Now, since the change over time of volumetric moisture content due to $z'$ at time $t$, on the account of \eqref{eq:J}, is given by
\[
\parder{\theta}{t}(z|z',t)=J(z,z'),
\]
from which
\[
\parder{\theta}{t}(z,t)=\int_{B_\delta(z)}J(z,z')\,\de V_{z'}+S(z).
\]
Thus, using \eqref{eq:C} and with peridynamic conductivity given by \eqref{eq:peryKappa}, our model \eqref{eq:Richards1D_IBVC}, endowed with Dirichlet boundary conditions, reads
\begin{subequations}\label{eq:peryRichards2}
\begin{align}
\parder{\theta}{t} &= \int_{B_\delta(z)}\frac{\varphi(z'-z)}{\|z'-z\|}\frac{K(z)+K(z')}{2}[H(z')-H(z)]\,\de V_{z'}+S(z), \label{eq:peryRichards21D} \\
\theta(z,0) &= \theta^{0}(z),\quad z\in[0,Z], \label{eq:peryRichards21D_IC_t=0} \\
\theta(0,t) &= \theta_{0}(t),\quad t\in[0,T], \label{eq:peryRichards21D_DirichletBC_z=0} \\
\theta(Z,t) &= \theta_{Z}(t),\quad t\in[0,T]. \label{eq:peryRichards21D_DirichletBC_z=Z}
\end{align}
\end{subequations}

\subsection{Selection of the influence function}\label{sec:influenceFunction}

Usually (see, e.g. \cite{yan2020,Bobaru_Duangpanya_JCP_2012,JabakhanjiMohtar2015})
$\varphi(z)$ in \eqref{eq:kappa} represents a convolution kernel, which can be chosen as a uniform influence function
\[
\varphi(z)\udef
\begin{cases}
\frac{2}{\delta}, & \|z\|\leq\delta, \\
0, & \|z\|>\delta,
\end{cases}
\]
or as a linear influence function
\[
\varphi(z)\udef
\begin{cases}
1-\frac{\|z\|}{\delta}, & \|z\|\leq\delta, \\
0, & \|z\|>\delta.
\end{cases}
\]
However, since such kernels would suggest the model to weigh more those cells where they are nonzero, and since our boundary conditions would typically be of Dirichlet type, we propose to consider a \emph{distributed influence function} (see Figure \ref{fig:influenceFunction}), concentrated on the domain boundary, of the form
\begin{equation}\label{eq:distributedInfluenceFunction}
\varphi(z)\udef
\begin{cases}
\frac{\|z\|-1+\delta}{\delta}, & \|z\|\geq1-\delta, \\
0, & \|z\|<1-\delta.
\end{cases}
\end{equation}

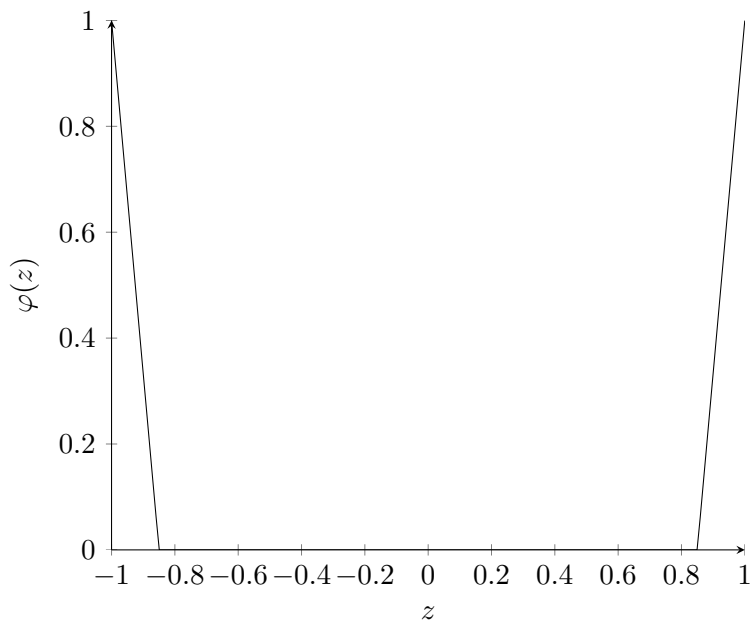
\begin{figure}
\centering
\begin{tikzpicture}
\begin{axis}[
    axis lines = left,
    xlabel = \(z\),
    ylabel = {\(\varphi(z)\)},
]

\addplot [
    domain=-1:1,
    samples=200,
    color=black,
]
{max(0,abs((x)/0.15)-(1-0.15)/0.15)};
\end{axis}
\end{tikzpicture}
\caption{Distributed influence function
defined in \eqref{fig:influenceFunction} with $\delta=0.15$.}
\label{fig:influenceFunction}
\end{figure}

In so doing, we are suggesting the model to averaging out not just what happens in the middle of the dynamics, but rather the behavior around each point of the spatial domain. In all our experiments, presented in Section \ref{sec:numericalSimulations}, uniform and linear influence functions do not make our proposed numerical method converge, resulting in instabilities and blow-ups after a relatively small amount of time integration; on the other hand, and as presented below, using \eqref{eq:distributedInfluenceFunction} guarantees stability and convergence, plus a reasonable shape of the numerical solutions.

\section{Numerical Method}\label{sec:numericalMethod}

The nonlocal Richards' equation~\eqref{eq:peryRichards2} can be discretized in space by using Chebyshev polynomials. This approach is typically used when the integral operator can be expressed in terms of convolution products~\cite{LPcheby,LPcheby2022,LPeigenv}. Moreover, the choice of such kind of polynomials allows us to overcome the limitation of imposing periodic boundary condition, which is necessary when dealing with Fourier trigonometric polynomials.

The proposed technique consists in looking for an approximation of $\theta(x,t)$ in the form of a finite linear combination of Chebyshev polynomials of the first kind. To do so, we can assume the spatial domain to be $[-1,1]$, as we can benefit of the orthogonality properties of the polynomials. However, a more general interval can be used as spatial domain by linearity. Moreover, for time integration we use the explicit Euler method, as in \cite{yan2020}.

In this section we briefly make a review on Chebyshev polynomials, then we derive the semidiscrete model of~\eqref{eq:peryRichards2} and finally prove the convergence of the proposed method.

\subsection{Basic overview on Chebyshev polynomials}

Chebyshev polynomials of the first kind, $T_k(z)$ are defined by
\[
    T_k(z)=\cos\left(k\arccos(z)\right),\qquad z\in[-1,1],\,\, k\in\N,
\]
and are orthogonal with respect to the weight function $w(z)\udef\left(\sqrt{1-z^2}\right)^{-1}$.

These polynomials are commonly used in the context of spectral approximation because they satisfy an interpolation property: given an integer $N$, any sufficiently smooth function $u$ defined on $[-1,1]$ can be expanded as an $(N+1)$-term linear combination of polynomials $u^N$ given by
\begin{equation}\label{eq:ifct}
u^N(z)\udef\sum_{k=0}^N \tilde{u}_k\ T_k(z),
\end{equation}
where $\tilde{u}_k$ are the coefficients of the expansion and approximate the Chebyshev coefficients
\[
\hat{u}_k=\frac{2}{\pi c_k}\int_{-1}^1 u(x)T_k(z)w(x)\, \de z,
\]
with
\[
c_k=\begin{cases}
2&\quad k=0\\
1&\quad k\ne0.
\end{cases}
\]

The explicit expression of $\tilde{u}_k$ depends on the choice of the grid points used to discretize $[-1,1]$. In particular, if we choose the Gauss-Lobatto collocation points
\begin{equation}
\label{eq:GL}
z_h\udef\cos\left(\frac{h\pi}{N}\right)  ,\qquad h=0,\dots,N,
\end{equation}
the expression of $\tilde{u}_k$ is
\begin{equation}
\label{eq:fct}
\tilde{u}_k=\frac{1}{\gamma_k}\sum_{h=0}^N u(x_h)\ T_k(z_h) w_h,
\end{equation}
where $\gamma_k$ is a normalization constant defined by
\begin{equation}
\label{eq:gamma}
\gamma_k\udef\begin{cases}
\pi\quad& k=0,N\\
\frac{\pi}{2}\quad& k=1,\dots,N-1
\end{cases}
\end{equation}
and
\begin{equation}
\label{eq:wh}
    w_h\udef\begin{cases}
    \frac{\pi}{2N}\quad& h=0,N\\
    \frac{\pi}{N}\quad& h=1,\dots,N-1.
    \end{cases}
\end{equation}

Equation~\eqref{eq:ifct} represents the inverse discrete Chebyshev transform, while the coefficients $\tilde{u}_k$ in~\eqref{eq:fct} correspond to the discrete Chebyshev transform. They can be efficiently computed using the Fast Fourier Transform. Additionally, they fulfill the same properties of the Fourier transform. In particular, we can rewrite a convolution product in the physic space as a multiplication of the Chebyshev transform of each factor in the frequency space.

The following result shows the rate of convergence of the Chebyshev approximation.
\begin{thm}[see~\cite{Canuto}]
\label{eq:th1}
For any $0\le\mu\le 2$ and $u\in L^2([-1,1])$, there exists a positive constant $C$ independent on $N$, such that
\begin{equation}
\label{eq:convrate}
\norm{u-u^N}_{L^2([-1,1])} \le \frac{C}{N^{2-\mu}} \norm{u}_{L^2([-1,1])}.
\end{equation}
\end{thm}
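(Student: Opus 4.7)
The plan is to follow the standard spectral-approximation argument going back to Canuto--Quarteroni, exploiting the fact that Chebyshev polynomials form an orthogonal basis of $L^2_w([-1,1])$ with $w(z)=(1-z^2)^{-1/2}$, and then transfer the resulting weighted estimate to the unweighted $L^2$ norm.

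First, I would split the error into projection and aliasing parts. Let $P_N u \udef \sum_{k=0}^{N} \hat{u}_k T_k$ denote the exact $L^2_w$-orthogonal truncation. Writing $u - u^N = (u - P_N u) + (P_N u - u^N)$, Parseval's identity for the Chebyshev expansion immediately gives
\begin{equation*}
\norm{u - P_N u}_{L^2_w}^2 \;=\; \sum_{k=N+1}^{\infty} \gamma_k\,\abs{\hat{u}_k}^2,
\end{equation*}
where $\gamma_k$ is the normalization constant in \eqref{eq:gamma}. The second piece $P_N u - u^N$ is the discrete-transform aliasing term, which by the Gauss--Lobatto quadrature identities can be written as $\sum_{k=N+1}^\infty \hat{u}_k (T_k - T_k^*)$, with $T_k^*$ its aliased image among $\{T_0,\dots,T_N\}$; a standard orthogonality computation bounds its $L^2_w$ norm by the same tail $\sum_{k>N}\abs{\hat{u}_k}^2\gamma_k$ up to an absolute multiplicative constant.

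Next, I would exploit smoothness to bound the tail. The key estimate is the Chebyshev-coefficient decay: if $u$ belongs to the weighted Sobolev scale of order $s$ (and in the generality of the statement one sets $s = 2-\mu$), then $\sum_{k\ge 0} k^{2s}\abs{\hat{u}_k}^2 \le C\norm{u}_{L^2}^2$, which follows by integrating by parts twice against $T_k$ and using the Sturm--Liouville identity $\frac{d}{dz}\bigl((1-z^2)^{1/2}\frac{dT_k}{dz}\bigr) = -k^2 (1-z^2)^{-1/2} T_k$. Pulling the smallest index out,
\begin{equation*}
\sum_{k=N+1}^{\infty}\gamma_k\abs{\hat{u}_k}^2 \;\le\; \frac{1}{(N+1)^{2s}}\sum_{k=N+1}^\infty \gamma_k k^{2s}\abs{\hat{u}_k}^2 \;\le\; \frac{C}{N^{2(2-\mu)}}\norm{u}_{L^2}^2,
\end{equation*}
which after taking square roots yields $\norm{u - u^N}_{L^2_w} \le C N^{-(2-\mu)}\norm{u}_{L^2}$.

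Finally, I would pass from the weighted to the unweighted $L^2$ norm. Since $w(z)\ge 1$ on $[-1,1]$, one has $\norm{u-u^N}_{L^2} \le \norm{u-u^N}_{L^2_w}$, giving the claim \eqref{eq:convrate}. The main obstacle, and the one I would expect to require the most care, is the aliasing estimate: bounding $\norm{P_Nu - u^N}_{L^2_w}$ uniformly in $N$ with the same decay rate as the truncation error. This uses the exactness of Gauss--Lobatto quadrature only up to polynomials of degree $2N-1$, so one must carefully track the contribution of the boundary weights $w_0, w_N$ in \eqref{eq:wh} when identifying the aliased coefficients, and verify that the resulting discrete Parseval inequality has constants independent of $N$.
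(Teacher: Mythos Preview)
The paper does not supply a proof of this theorem at all: it is quoted verbatim from Canuto's book (the bracket \texttt{[see~\textbackslash cite\{Canuto\}]} in the theorem header signals this), and the paper proceeds immediately to the next subsection. So there is no ``paper's own proof'' to compare your argument against; the result is used as a black box.

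That said, your proposed argument has a genuine gap that you should be aware of. The step where you assert
\[
\sum_{k\ge 0} k^{2s}\abs{\hat{u}_k}^2 \le C\norm{u}_{L^2}^2
\]
is not true for a general $u\in L^2([-1,1])$: the left-hand side is (up to constants) the square of a weighted Sobolev norm of order $s$, and there is no reason this should be controlled by the plain $L^2$ norm. Your integration-by-parts justification via the Sturm--Liouville identity produces $\norm{u}_{H^s_w}$ on the right, not $\norm{u}_{L^2}$. Consequently the tail bound you derive,
\[
\sum_{k>N}\gamma_k\abs{\hat{u}_k}^2 \le \frac{C}{N^{2(2-\mu)}}\norm{u}_{L^2}^2,
\]
does not follow. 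In fact the theorem as printed in the paper appears to be mis-stated: for $u$ merely in $L^2$ no rate of convergence can hold (take any $u$ whose Chebyshev coefficients decay arbitrarily slowly). The correct Canuto--Quarteroni estimate, which the paper itself later records as Lemma~\ref{lm:sobolev}, carries the higher Sobolev norm $\norm{u}_{H^s_w}$ on the right-hand side. Your outline is essentially the standard proof of \emph{that} correct statement, modulo this slip; but as written it purports to prove something false.
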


In the next section, to lighten the notation, we denote the Chebyshev transform by $\mathcal{F}$ and the inverse Chebyshev transform by $\mathcal{F}^{-1}$.

\subsection{Chebyshev semi-discrete collocation method for the nonlocal Richards' equation}

In what follows, we develop a spectral approximation of~\eqref{eq:peryRichards2} by using the Chebyshev transform. We fix $N>0$ and assume $\Omega=[-1,1]$. We can discretize the spatial domain by the Guass-Lobatto points $z_h$, $h=0,\dots,N$ defined in~\eqref{eq:GL}.

If we set
\begin{align*}
\Lambda(z) &\udef K(z)H(z), \\
\overline{\varphi}(z) &\udef \frac{\varphi(z)}{\norm{z}}, \\
\beta &\udef \int_{-1}^{1}\overline{\varphi}(z)\,\de z,
\end{align*}
then, since from distributed influence function definition \eqref{eq:distributedInfluenceFunction} it follows that $\beta=2\left(1+\frac{1-\delta}{\delta}\ln(1-\delta)\right)$, we can rewrite model \eqref{eq:peryRichards21D} as
\begin{equation}\label{eq:periRichard3}
\begin{split}
\parder{\theta}{t}
&=\int_{B_\delta(z)}\frac{\varphi(z'-z)}{\|z'-z\|}\  \frac{K(z')+K(z)}{2}\ [H(z')-H(z)]\,\de V_{z'}+S(z),\\
&=\frac12\left[\left(\overline{\varphi}\ast\Lambda\right)(z) + K(z)\left(\overline{\varphi}\ast H\right)(z)-H(z)\left(\overline{\varphi}\ast K\right)(z)-\beta\Lambda(z)\right] + S(z).
\end{split}
\end{equation}

Thus, the right hand side of~\eqref{eq:periRichard3} can be computed by means of the finite discrete Chebyshev transform. Indeed, we have
\begin{align}
\label{eq:firstterm}
\left(\overline{\varphi}\ast \Lambda\right)(z)&=\mathcal{F}^{-1}\left(\mathcal{F}\left(\overline{\varphi}\right)\mathcal{F}\left(\Lambda\right)\right)(z),\\
\label{eq:secondterm}
\left(\overline{\varphi}\ast H \right)(z)&=\mathcal{F}^{-1}\left(\mathcal{F}\left(\overline{\varphi}\right)\mathcal{F}\left(H\right)\right)(z),\\
\label{eq:thirdterm}
\left(\overline{\varphi}\ast K \right)(z)&=\mathcal{F}^{-1}\left(\mathcal{F}\left(\overline{\varphi}\right)\mathcal{F}\left(K\right)\right)(z).
\end{align}
So, at each collocation point $z_h$, the semi-discretization of the model reads
\begin{equation}
\label{eq:scheme}
\begin{split}
\parder{\theta}{t}(z_h,t)&=\frac12\left(\mathcal{F}^{-1}\left(\mathcal{F}\left(\overline{\varphi}\right)\mathcal{F}\left(\Lambda\right)\right)(z_h)+K(z_h)\ \mathcal{F}^{-1}\left(\mathcal{F}\left(\overline{\varphi}\right)\mathcal{F}\left(H\right)\right)(z_h)\right)\\
&\quad-\frac12\left(H(z_h)\ \mathcal{F}^{-1}\left(\mathcal{F}\left(\overline{\varphi}\right)\mathcal{F}\left(K\right)\right)(z_h)+\beta\Lambda(z_h)\right)+ S(z_h)
\end{split}
\end{equation}

The function $\Lambda$ is defined as the product between the conductivity $K$ and the hydraulic potential $H$: therefore, to compute its Chebyshev transform, we first need to compute a product. The computational cost to obtain this term could be efficiently reduced by observing that the Chebyshev coefficients of $\Lambda$ can be obtained from the Chebyshev coefficients of $H$ and $K$.

Indeed, the following result holds (see~\cite{Baszenski19971}).
\begin{thm}
\label{th:lambda}
Let $N\in\N$. If $H$ and $K$ are approximated by a finite series of Chebyshev polynomials $H^N$ and $K^N$, respectively, given by
\begin{equation*}
    H^N(z)=\sum_{j=0}^N \tilde{H}_k T_k(z),\quad K^N(z)=\sum_{j=0}^N \tilde{K}_j T_j(z),
\end{equation*}
then the product $\Lambda(z)=H(z)K(z)$ can be approximated by the following $2N+1$ combination of Chebyshev polynomials
\begin{equation*}
    \Lambda^N(z)=\sum_{j=0}^{2N}\tilde{\Lambda}_j T_j(z),
\end{equation*}
where the coefficients $\tilde{\Lambda}_j$ are given by
\[
   2\tilde{\Lambda}_j=\begin{cases}
   2\tilde{H}_0\tilde{K}_0+\sum_{\ell=1}^N \tilde{H}_\ell\tilde{K}_\ell,\quad&j=0\\
   \sum_{\ell=0}^j \tilde{H}_{j-\ell}\tilde{K}_\ell+\sum_{\ell=0}^{N-j}\tilde{H}_{j+\ell}\tilde{K}_\ell + \sum_{\ell=j}^N \tilde{H}_{\ell-j}\tilde{K}_{\ell},\quad&j=1,\dots,N\\
   \sum_{\ell=j-N}^N \tilde{H}_{j-\ell}\tilde{K}_\ell,\quad&j=N+1,\dots,2N.
   \end{cases}
\]
\end{thm}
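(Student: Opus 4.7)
The plan is to reduce everything to the Chebyshev product formula
\[
T_i(z) T_j(z) = \frac{1}{2}\bigl(T_{i+j}(z) + T_{|i-j|}(z)\bigr),
\]
which is an immediate consequence of the identity $\cos(i\theta)\cos(j\theta) = \frac{1}{2}(\cos((i+j)\theta) + \cos((i-j)\theta))$ combined with the definition $T_k(z) = \cos(k \arccos z)$ and the fact that $T_{|k|} = T_k$. With this tool in hand, multiplying the two finite Chebyshev expansions gives
\[
\Lambda^N(z) = H^N(z) K^N(z) = \frac{1}{2}\sum_{i=0}^N \sum_{j=0}^N \tilde{H}_i \tilde{K}_j \bigl(T_{i+j}(z) + T_{|i-j|}(z)\bigr),
\]
so the remaining task is purely a bookkeeping exercise: collect, for each $m \in \{0, 1, \dots, 2N\}$, the coefficient of $T_m(z)$.

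I would carry out the collection by splitting the double sum into an \emph{additive} part, corresponding to the pairs $(i,j)$ with $i+j = m$, and a \emph{subtractive} part, corresponding to the pairs with $|i-j| = m$, and then analyzing three cases for $m$. For $m = 0$, the additive part contributes only the pair $(0,0)$ while the subtractive part contributes every diagonal pair $(\ell,\ell)$ with $0 \le \ell \le N$; merging the two $\ell=0$ contributions produces the coefficient $2\tilde{H}_0 \tilde{K}_0 + \sum_{\ell=1}^{N}\tilde{H}_\ell \tilde{K}_\ell$, explaining the factor $2$ on the first line of the claim. For $1 \le m \le N$, the additive part gives $\sum_{\ell=0}^{m} \tilde{H}_{m-\ell}\tilde{K}_\ell$ directly, while the subtractive part splits into two branches: $i - j = m$, which after setting $\ell = j$ yields $\sum_{\ell=0}^{N-m} \tilde{H}_{m+\ell}\tilde{K}_\ell$, and $j - i = m$, which after the change of variable $\ell = j = i+m$ yields $\sum_{\ell=m}^{N}\tilde{H}_{\ell-m}\tilde{K}_\ell$. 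For $N+1 \le m \le 2N$ the subtractive part is empty, since $|i-j| \le N < m$, so only the additive part survives, and the box constraint $0 \le i,j \le N$ with $i+j=m$ forces the summation index $\ell = j$ to start at $m - N$, producing $\sum_{\ell=m-N}^{N}\tilde{H}_{m-\ell}\tilde{K}_\ell$. In each case one multiplies the aggregated sum by the $1/2$ coming from the product formula, which matches the prefactor $2$ on the left-hand side of the stated coefficient identity.

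The main obstacle is not any deep mathematical difficulty but careful tracking of index ranges through the case analysis, in particular the apparent asymmetry at $m=0$, which is purely an artifact of the pair $(0,0)$ being counted once in the additive part and once in the diagonal of the subtractive part. No regularity or convergence issue needs to be addressed here, because both $H^N$ and $K^N$ are finite-degree polynomials and their product is computed exactly as a polynomial of degree at most $2N$; the quality of the approximation $\Lambda^N \approx HK$ is a separate question, controlled by Theorem~\ref{eq:th1}.
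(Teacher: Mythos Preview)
Your argument is correct: the Chebyshev product identity $T_i T_j = \tfrac{1}{2}(T_{i+j}+T_{|i-j|})$ reduces the statement to a finite reindexing, and your case analysis for $m=0$, $1\le m\le N$, and $N+1\le m\le 2N$ is accurate, including the handling of the double count of $(0,0)$ in the $m=0$ case.

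As for comparison with the paper: there is nothing to compare. The paper does not supply its own proof of this theorem; it is quoted as a known result with a citation to~\cite{Baszenski19971}. Your derivation is precisely the standard elementary argument one would find in such a reference, so in effect you have filled in what the paper outsources.
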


The application of Theorem~\ref{th:lambda} implies that the first term in the right hand side of~\eqref{eq:scheme} is discretized by $2N+1$ mesh points. Therefore, to maintain the consistency of the scheme, the discretization of the remaining terms on the right hand side of~\eqref{eq:scheme} is accomplished by considering $2N+1$ Gauss-Lobatto collocation points.

\subsection{Convergence of the semi-discrete scheme}
We prove the convergence of the spectral semi-discrete method in a suitable weighted Hilbert space. Throughout this section, $C$ denotes a generic constant.

We consider the weighted Lebesgue space
\[
L^2_w([-1,1])=\left\{u\in L^2\,:\,\int_{-1}^1 u^2(z) w(z) \de z <+\infty \right\}
\]
equipped with the inner product and the norm respectively
\[
(u,v)_w = \int_{-1}^1 u(z)v(z)w(z)\,\de z,\qquad \norm{u}^2_w = (u,u)_w,
\]
where $w(z)=\left(\sqrt{1-z^2}\right)^{-1}$.

For any $s\ge0$, we set
\[
H^s_w([-1,1])=\left\{u\in L^2_w([-1,1])\ | \ \norm{u}_{s,w}<+\infty\right\},
\]
where
\[
\norm{u}_{s,w}^2=\sum_{|\alpha\le s|}\norm{D^\alpha u}_{w}^2.
\]
Let $S_{N}$ be the space of Chebyshev polynomials of degree $N$,
\[
S_{N}\udef \text{span}\left\{T_h(x)\ |\ 0 \le h\le N\right\}\subset L^2_w([-1,1]),
\]
and $P_N: L^2_w([-1,1]) \to S_N$ be an orthogonal projection operator
\[
P_Nu(x)\udef \sum_{h=0}^N \hat{u}_h T_h(x)w_h,
\]
for $w_h$ defined in~\eqref{eq:wh}, such that for any $u\in L^2_w([-1,1])$, the following equality holds
\begin{equation}
\label{eq:orthogonal}
(u-P_Nu,\varphi)_w = 0,\quad\text{for every $\varphi\in S_N$}.
\end{equation}
The projection operator $P_N$ commutes with derivatives in the distributional sense:
\[
\partial_t^q P_Nu = P_N\partial_t^q u,\quad q\in\N,q\geq1,
\]
where, as usual, $\partial_t\udef\parder{}{t}$. \\
Letting $s\geq1$, we denote by $X_s \udef \mathcal{C}^0\left(0,T; H^s_w([-1,1])\right)$ the space of all continuous functions in the weighted Sobolev space $H^s_w([-1,1])$,
with norm
\[
\norm{u}_{X_s}^2 \udef \max_{t\in[0,T]}\norm{u(\cdot,t)}_{s,w}^2,
\]

for any $T> 0$.
We denote by $\mathcal{L}$ the nonlocal integral operator of~\eqref{eq:peryRichards2}, namely
\begin{equation}
\label{eq:L}
\mathcal{L}\left(\theta\right)\udef\int_{B_\delta(z)}\frac{\varphi(z'-z)}{\|z'-z\|}\frac{K(z)+K(z')}{2}[H(z')-H(z)]\,\de V_{z'}.
\end{equation}
Then, the semi-discrete spectral scheme for~\eqref{eq:peryRichards2} can be rewritten as
\begin{gather}
\label{eq:schemePN}
\parder{\theta^N}{t} = P_N \mathcal{L}(\theta^N) + P_N S(z),\\
\label{eq:initial_scheme}
\theta^N(z,0) = P_N \theta^0(z),
\end{gather}
where $\theta^{N}(z,t)\in S_N$ for every $0\le t\le T$.

To obtain the convergence of the semi-discrete scheme, we need of the following lemma.
\begin{lem}[{\cite[Theorem 3.1]{Canuto}}]
\label{lm:sobolev}
For any real $0\le \mu\le s$, there exists a positive constant $C$ such that
\begin{equation}
\label{eq:sobolev}
\norm{u-P_N u}_{H^\mu_{s,w}{\mu}([-1,1])} \le \frac{C}{N^{s-\mu}}\norm{\theta}_{H^s_{w}([-1,1])}, \quad\text{for every $\theta\in H^{s}_w([-1,1])$}.
\end{equation}
\end{lem}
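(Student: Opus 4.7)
The plan is to reduce the estimate to a spectral characterization of the weighted Sobolev norms in terms of Chebyshev coefficients. Writing the expansion $u=\sum_{k=0}^{\infty}\hat u_k T_k$, the $L^2_w$-orthogonal projection onto $S_N$ acts as a Fourier-type truncation, $P_Nu=\sum_{k=0}^{N}\hat u_k T_k$, so that $u-P_Nu=\sum_{k>N}\hat u_k T_k$. Hence everything boils down to controlling this tail in the $H^\mu_w$-norm, with a quantitative gain of $N^{-(s-\mu)}$ coming from the fact that only high-frequency coefficients survive the truncation.

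The key preliminary step I would carry out is the norm equivalence
\[
\norm{v}_{\mu,w}^{2} \;\asymp\; \sum_{k\ge 0}(1+k^{2})^{\mu}\,|\hat v_k|^{2},
\]
valid for all $v\in H^\mu_w([-1,1])$. The natural tool is the singular Sturm--Liouville operator $\mathcal{S}v := -\sqrt{1-z^{2}}\,\partial_z\bigl(\sqrt{1-z^{2}}\,\partial_z v\bigr)$, which is self-adjoint on $L^{2}_{w}$ and has the Chebyshev polynomials $T_k$ as eigenfunctions with eigenvalues $k^{2}$. Its fractional powers $(I+\mathcal{S})^{\mu/2}$ induce the spectral norm above, equivalent to $\norm{\cdot}_{\mu,w}$: for integer $\mu$ one verifies the equivalence by iterating the three-term identity expressing $\partial_z T_k$ as a combination of $T_j$'s with $j<k$ and using orthogonality, and the general real case follows by Hilbert-space interpolation between consecutive integers. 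Once this is in hand, Parseval combined with the elementary bound $1+k^{2}\ge c\,N^{2}$ for $k>N$ yields
\[
\norm{u-P_Nu}_{\mu,w}^{2}
\;\le\; C\sum_{k>N}(1+k^{2})^{\mu}|\hat u_k|^{2}
\;\le\; \frac{C}{N^{2(s-\mu)}}\sum_{k>N}(1+k^{2})^{s}|\hat u_k|^{2}
\;\le\; \frac{C}{N^{2(s-\mu)}}\norm{u}_{s,w}^{2},
\]
and taking square roots delivers the claim.

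The main obstacle is precisely the spectral characterization step, because $\partial_z$ does not diagonalize in the Chebyshev basis: derivatives of $T_k$ expand as finite sums of several $T_j$ with $j<k$, so the polynomial weight $k^{2s}$ appearing in the spectral norm does not come from a one-line computation as it does for Fourier series on the torus. The cleanest route is to bypass $\partial_z$ entirely and work with the Sturm--Liouville operator $\mathcal{S}$, whose eigenvalues carry exactly the scaling $k^{2}$ that the sharp rate $N^{-(s-\mu)}$ requires. Non-integer orders $s$ and $\mu$ contribute a second technicality, handled by real or complex interpolation between the integer cases, and inspection shows that the resulting constant $C$ depends only on $\mu$ and on the absolute constants in the norm equivalence, not on $N$.
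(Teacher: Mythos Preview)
The paper does not supply its own proof of this lemma: it is quoted from \cite[Theorem~3.1]{Canuto} and invoked as a black box in the convergence argument for Theorem~\ref{th:convergence}. So there is nothing in the paper to compare against beyond the citation.

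Your argument is correct and is essentially the standard route taken in the cited reference: recognize $P_N$ as truncation of the Chebyshev expansion, characterize the weighted Sobolev norms spectrally through the singular Sturm--Liouville operator $\mathcal{S}$ whose eigenfunctions are the $T_k$ with eigenvalues $k^{2}$, and extract the factor $N^{-(s-\mu)}$ from the high-frequency tail. The step you correctly identify as the main obstacle---the equivalence $\norm{v}_{\mu,w}^{2}\asymp\sum_{k\ge 0}(1+k^{2})^{\mu}|\hat v_k|^{2}$ for the \emph{derivative-based} norm $\norm{u}_{s,w}^{2}=\sum_{|\alpha|\le s}\norm{D^{\alpha}u}_{w}^{2}$ adopted in the paper---is indeed the nontrivial ingredient, because the endpoint singularity of $w(z)=(1-z^{2})^{-1/2}$ prevents $\partial_z$ and $\mathcal{S}^{1/2}$ from being compared by a one-line calculation. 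In the Canuto--Quarteroni treatment this equivalence is itself a separate lemma, proved for integer orders via the recurrence relations for derivatives of $T_k$ and then extended by interpolation, exactly as you outline. Once that is granted, the remainder of your argument goes through as written.
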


Recalling that $K$ and $h_m$ are locally Lipschitz in their respective domains, we can prove the following theorem.
\begin{thm}
\label{th:convergence}
Let $s\ge 1$ and $\theta(z,t)\in X_s$ be the solution to the initial-boundary-valued problem~\eqref{eq:peryRichards2} and $\theta^N(z,t)$ be the solution to the semi-discrete scheme~\eqref{eq:schemePN}-\eqref{eq:initial_scheme}.
Then, there exists a positive constant $C$, independent on $N$, such that
\begin{equation}
\label{eq:order_conv}
\norm{\theta-\theta^N}_{X_1} \le C(T) \left(\frac{1}{N}\right)^{s-1} \norm{u}_{X_s},
\end{equation}
for any initial data $\theta^0\in H^s_w([-1,1])$ and for any $T > 0$.
\end{thm}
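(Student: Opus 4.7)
The plan is to carry out the standard spectral-convergence strategy: split the total error into a projection error and a truncation error, bound the projection error by the Canuto-type estimate of Lemma \ref{lm:sobolev}, and absorb the truncation error via a Gronwall argument driven by the Lipschitz character of the nonlocal operator $\mathcal{L}$. Concretely, write
\[
\theta - \theta^N = (\theta - P_N\theta) + (P_N\theta - \theta^N) \defu \eta + e^N,
\]
so that $\eta \in S_N^\perp$ with $\|\eta(\cdot,t)\|_{1,w}\le CN^{1-s}\|\theta(\cdot,t)\|_{s,w}$ by Lemma \ref{lm:sobolev} applied with $\mu=1$, uniformly in $t$. It therefore suffices to control $\|e^N\|_{X_1}$ by the same order.

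To derive the evolution equation for $e^N$, apply $P_N$ to the PDE satisfied by $\theta$ and use the commutation $\partial_t P_N = P_N \partial_t$ to get $\partial_t (P_N\theta) = P_N\mathcal{L}(\theta) + P_N S$. Subtracting \eqref{eq:schemePN} and using $e^N(\cdot,0)=0$ (since $\theta^N(z,0)=P_N\theta^0$), I obtain
\[
\partial_t e^N = P_N\bigl[\mathcal{L}(\theta) - \mathcal{L}(\theta^N)\bigr], \qquad e^N(\cdot,0)=0.
\]
Take the $L^2_w$ inner product with $e^N$; since $e^N\in S_N$, the orthogonality relation \eqref{eq:orthogonal} allows removal of $P_N$ on the right:
\[
\tfrac{1}{2}\,\tfrac{\de}{\de t}\|e^N\|_w^2 = \bigl(\mathcal{L}(\theta)-\mathcal{L}(\theta^N),\,e^N\bigr)_w.
\]
Repeating the computation after differentiating the equation in $z$ (which is harmless because $\mathcal{L}$ contains no spatial derivatives, only a convolution against $\overline{\varphi}$ and pointwise products with $K$ and $H$) yields the analogous estimate for $\partial_z e^N$, hence an energy identity in the full $H^1_w$-norm.

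The key analytic step is then a \emph{Lipschitz bound for $\mathcal{L}$ in the $H^1_w$-norm}. Using the decomposition in \eqref{eq:periRichard3}, $\mathcal{L}(\theta)-\mathcal{L}(\theta^N)$ splits into four terms of the form $\overline{\varphi}\ast(\cdot)$ and $(\cdot)\,(\overline{\varphi}\ast\cdot)$ with arguments $K(\theta)-K(\theta^N)$, $H(\theta)-H(\theta^N)$ and $\Lambda(\theta)-\Lambda(\theta^N)$. Since $K$ and $h_m$ are locally Lipschitz in $\theta$ (and hence so are $H=h_m+z$ and $\Lambda=KH$, on any bounded set containing the range of the approximations), Young's inequality for the convolution against the fixed integrable kernel $\overline{\varphi}$ together with the uniform bounds on $K,H$ gives, on a fixed a-priori bounded ball,
\[
\bigl\|\mathcal{L}(\theta)-\mathcal{L}(\theta^N)\bigr\|_{1,w} \le L \,\|\theta-\theta^N\|_{1,w} \le L\bigl(\|\eta\|_{1,w}+\|e^N\|_{1,w}\bigr),
\]
with $L$ independent of $N$. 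Inserting this into the energy identity and applying Cauchy-Schwarz produces a differential inequality of the form $\tfrac{\de}{\de t}\|e^N\|_{1,w}^2 \le C\|e^N\|_{1,w}^2 + C\|\eta\|_{1,w}^2$, whence Gronwall's lemma and the projection estimate for $\eta$ combine to give $\|e^N\|_{X_1} \le C(T) N^{1-s}\|\theta\|_{X_s}$, and adding back $\eta$ concludes \eqref{eq:order_conv}.

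The main obstacle I expect is precisely this Lipschitz estimate in the \emph{weighted} Sobolev norm: the weight $w(z)=(1-z^2)^{-1/2}$ is singular at $\pm 1$, so the usual Young inequality for $\overline{\varphi}\ast f$ must be justified against $\|\cdot\|_w$, and the nonlinear compositions $K(\theta)$, $H(\theta)$ must be controlled in $H^1_w$ (which requires an a-priori $L^\infty$-bound on $\theta^N$, typically obtained from a bootstrap/continuation argument on a maximal time interval where $\theta^N$ remains in the Lipschitz region of $K$ and $h_m$). Once this technical step is handled, everything else is routine: commutation of $P_N$ with $\partial_t$, orthogonality, Gronwall, and the already-stated projection lemma.
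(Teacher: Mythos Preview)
Your proposal is correct and follows essentially the same route as the paper: the same splitting $\theta-\theta^N=(\theta-P_N\theta)+(P_N\theta-\theta^N)$, the projection estimate from Lemma~\ref{lm:sobolev} for the first piece, an energy identity for $e^N=P_N\theta-\theta^N$ obtained via orthogonality~\eqref{eq:orthogonal}, the local Lipschitz property of $\mathcal{L}$ (through $K$ and $h_m$) to bound the nonlinear term, and Gronwall to close. The only cosmetic difference is that the paper subtracts~\eqref{eq:schemePN} from~\eqref{eq:peryRichards2} and then tests against $P_N\theta-\theta^N$ (splitting into $I_1,I_2,I_3$), whereas you first project the exact equation to obtain $\partial_t e^N=P_N[\mathcal{L}(\theta)-\mathcal{L}(\theta^N)]$ directly; these are equivalent, and your explicit mention of repeating the argument for $\partial_z e^N$ to upgrade from $L^2_w$ to $H^1_w$ is in fact more careful than the paper's passage from the $L^2_w$ pairing to the $H^1_w$ norm in~\eqref{eq:I1}.
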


\begin{proof}
Let $s\ge 1$. Using the triangular inequality, we have
\begin{equation}
\label{eq:triangular}
\norm{\theta-\theta^N}_{X_1} \le \norm{\theta-P_N \theta}_{X_1} + \norm{P_N \theta - \theta^N}_{X_1}.
\end{equation}
Lemma~\ref{lm:sobolev} implies
\[
\norm{(\theta-P_N \theta)(\cdot,t)}_{H^1_w([-1,1])} \le \frac{C}{ N^{s-1}}\norm{\theta(\cdot,t)}_{H^s_w([-1,1])}.
\]
Therefore,
\begin{equation}
\label{eq:1term}
\norm{\theta-P_N \theta}_{X_1} \le \frac{C}{ N^{s-1}}\norm{\theta}_{X_s}.
\end{equation}
Subtracting~\eqref{eq:schemePN} from~\eqref{eq:peryRichards2} and taking the weighted inner product with $P_N \theta - \theta^N \in S_N$, we have
\begin{equation}
\label{eq:difference}
\begin{split}
0=&\underbrace{\int_{-1}^1 \left(\partial_t\theta(z,t)-\partial_t\theta^N(z,t)\right)\left(P_N \theta(z,t) - \theta^N(z,t)\right)w(z)\,\de z}_{{}=:I_1}\\
 &- \underbrace{\int_{-1}^1\left(\mathcal{L}(\theta(z,t))-P_N \mathcal{L}(\theta^N(z,t))\right)\left(P_N \theta(z,t)- \theta^N(z,t)\right)w(z)\,\de z}_{{}=:I_2}\\
&- \underbrace{\int_{-1}^1\left(S(z,t)-P_N S(z,t)\right)\left(P_N \theta(z,t) - \theta^N(z,t)\right)w(z)\,\de z}_{{}=:I_3}.
\end{split}
\end{equation}
The orthogonal condition~\eqref{eq:orthogonal} implies that
\[
\int_{-1}^1\left(\partial_t\theta(z,t) - P_N \partial_t\theta(z,t)\right)\left(P_N \theta(z,t)-\theta^N(z,t)\right)w(z)\,\de z = 0,
\]
and
\[
\int_{-1}^1\left(S(z)-P_N S(z)\right)\left(P_N \theta(z,t) - \theta^N(z,t)\right)w(z)\,\de z =0.
\]
Thus,
\begin{equation}
\label{eq:I1}
\begin{split}
I_1 &= \int_{-1}^1\left(\partial_t\theta(z,t) - P_N \partial_t\theta(z,t)\right)\left(P_N \theta(z,t)-\theta^N(z,t)\right)w(z)\,\de z\\
& + \int_{-1}^1 \left(P_N \partial_t\theta(z,t)-\theta^N_{t}(z,t)\right)\left(P_N \theta(z,t)-\theta^N(z,t)\right)w(z)\,\de z\\
&= \frac{1}{2}\der{}{t}\norm{(P_N \theta - \theta^N)(\cdot,t)}^2_{H^1_w([-1,1])}.
\end{split}
\end{equation}
Since it is straightforward to see that $I_3 =0$, we can focus on $I_2$. From \eqref{eq:orthogonal} it follows that
\[
\int_{-1}^1\left(\mathcal{L}(\theta^N(z,t))-P_N \mathcal{L}(\theta^N(z,t))\right)\left(P_N \theta(z,t)-\theta^N(z,t)\right)w(z)\,\de z =0,
\]
and so, due to the locally Lipschitzianity of $K$ and $h_m$ and from Cauchy and triangular inequalities, we obtain
\begin{equation}
\label{eq:I2}
\begin{split}
I_2 &=\int_{-1}^1 \left(\mathcal{L}(\theta(z,t))-\mathcal{L}(\theta^N(z,t))\right)\left(P_N \theta(z,t)-\theta^N(z,t)\right)w(z)\,\de z\\
&\le C \norm{\left(\theta-\theta^N\right)(\cdot,t)}^2_{H^1_w([-1,1])} + C \norm{(P_N \theta - \theta^N)(\cdot,t)}^2_{H^1_w([-1,1])}\\
&\le 2C \norm{\left(\theta-P_N\theta\right)(\cdot,t)}^2_{H^1_w([-1,1])} + 3C \norm{(P_N \theta - \theta^N)(\cdot,t)}^2_{H^1_w([-1,1])}.
\end{split}
\end{equation}

Plugging~\eqref{eq:I1} and~\eqref{eq:I2} in~\eqref{eq:difference}, we have
\begin{equation}
\label{eq:substitution}
\der{}{t}\norm{(P_N \theta - \theta^N)(\cdot,t)}^2_{H^1_w([-1,1])} \le 4C \norm{(\theta-P_N\theta)(\cdot,t)}^2_{H^1_w([-1,1])} +6 C \norm{(P_N \theta - \theta^N)(\cdot,t)}^2_{H^1_w([-1,1])}.
\end{equation}

Since $\norm{(P_N \theta - \theta^N)(\cdot,0)}_{H^1_w([-1,1])} = 0$, Lemma~\ref{lm:sobolev} and Gronwall's inequality imply that
\begin{align*}
\norm{(P_N \theta - \theta^N)(\cdot,t)}^2_{H^1([-1,1])}&\le4C\int_0^t e^{6C(t-\tau)}\norm{(\theta-P_N \theta)(\cdot,\tau)}^2_{H^1_w([-1,1])}\,\de\tau\\
&\le \frac{C(T)}{N^{2s-2}}\int_0^t\norm{\theta(\cdot,\tau)}^2_{H^1_w([-1,1])}\,\de\tau.
\end{align*}

Hence,
\begin{equation}
\label{eq:2term}
\norm{P_N \theta - \theta^N}_{X_1}\le \frac{C(T)}{N^{s-1}}\norm{\theta}_{X_s}.
\end{equation}
Finally, using~\eqref{eq:1term} and~\eqref{eq:2term} in~\eqref{eq:triangular}, we complete the proof.
\end{proof}

\section{Numerical Simulations}\label{sec:numericalSimulations}

In this section we test our proposed method on different soils, possibly with a sink forcing term, representing the water uptake due to root systems. Van Genuchten - Mualem constitutive relations are considered in the following numerical simulations.

\begin{exm}\label{exm:1}
Drawing from \cite{Berardi_Difonzo_Notarnicola_Vurro_APNUM_2019}, we consider a soil with the following parameters:
\[
\theta_r=0.075,\,\theta_S=0.287,\,\alpha=0.036,\,n=1.56,\,K_S=0.94e-3\,\textrm{cm/s}.
\]
\begin{figure}
    \centering    \includegraphics[width=0.8\textwidth]{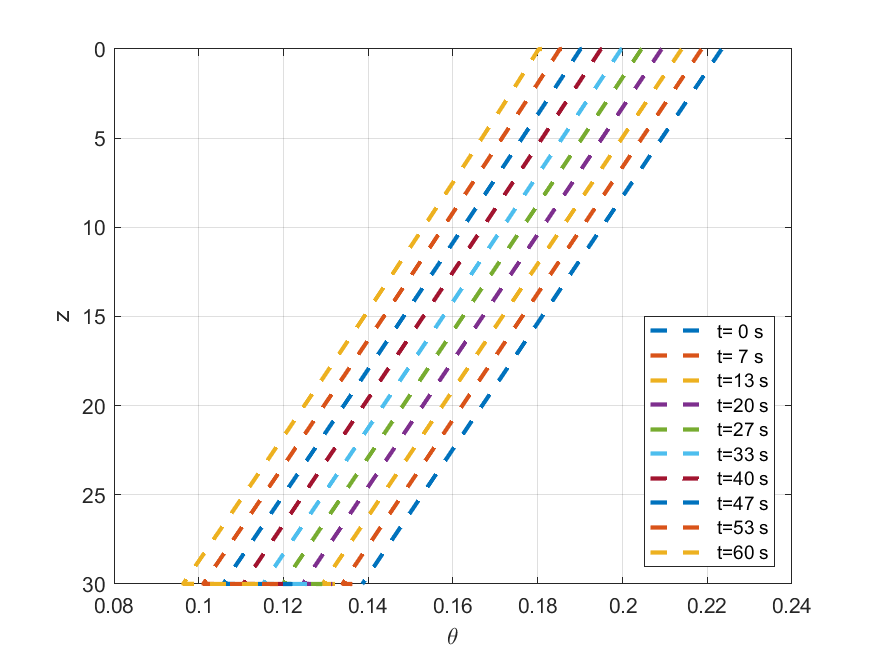}
    \caption{Numerical solution relative to Example \ref{exm:1}.}
    \label{fig:1}
\end{figure}
We added a sink term $S=-700\,\textrm{s}^{-1}$ and parameter $\delta=0.15$ in \eqref{eq:horizon}. We set our initial and boundary conditions as follows
\begin{align*}
\theta(0,t) &= 0.2234\left(1-\frac{t}{T}\right)+0.181\frac{t}{T},\,\,t\in[0,T], \\
\theta(Z,t) &= 0.1368\left(1-\frac{t}{T}\right)+0.1174\frac{t}{T},\,\,t\in[0,T], \\
\theta(z,0) &= 0.2234-\left(1-\frac{z}{Z}\right)\frac{0.0848}{2},\,\,z\in[0,Z].
\end{align*}
Here $Z=30$ cm, $T=60$ s; moreover, we used $\Delta t=0.06$ s and $\Delta x=0.3$ cm. Results are shown in Figure \ref{fig:1}.
\end{exm}

\begin{exm}\label{exm:3}
As already considered by \cite{Hills_et_al_1989}, we select a Glendale clay loam, characterized by the following  parameters
\[
\theta_r=0.1060,\,\theta_S=0.4686,\,\alpha=0.0104,\,n=1.3954,\,K_S=1.5162e-4\,\textrm{cm/s}.
\]
\begin{figure}
    \centering    \includegraphics[width=0.8\textwidth]{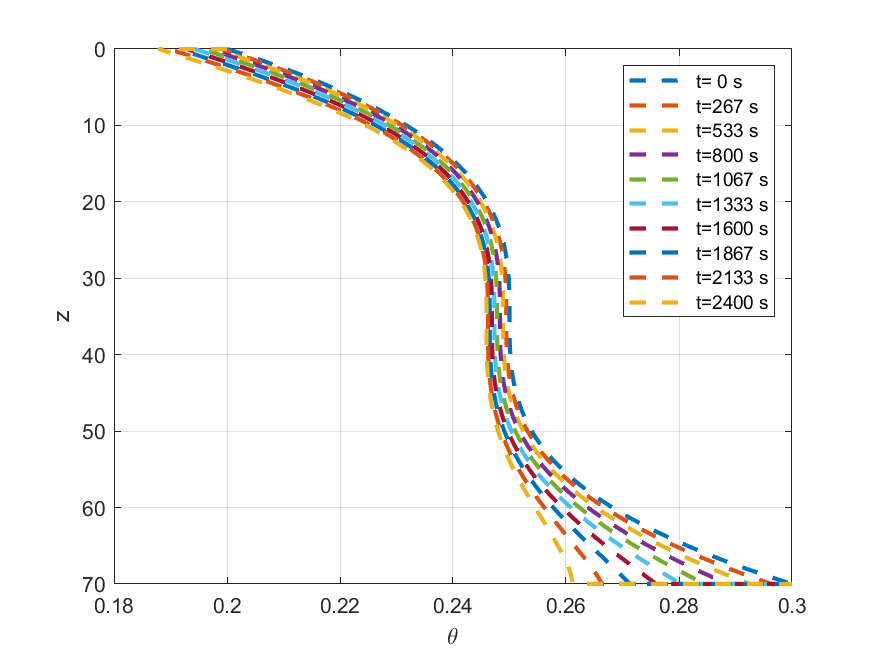}
    \caption{Numerical solution relative to Example \ref{exm:3}.}
    \label{fig:3}
\end{figure}
We put neither sink nor source on this simulation. Peridynamic parameter is $\delta=0.15$ in \eqref{eq:horizon}. Our boundary conditions are constant with values
\begin{align*}
\theta(0,t) &= 0.2,\,\,t\in[0,T], \\
\theta(Z,t) &= 0.3,\,\,t\in[0,T],
\end{align*}
while initial condition follows a nonlinear profile of the form
\[
\theta(z,0)=-0.05z^3+0.25,\,\,z\in[0,Z].
\]
We select $Z=70$ cm, $T=2400$ s; moreover, we used $\Delta t=2.4$ s and $\Delta x=0.3$ cm. The resulting water content profiles are shown in Figure \ref{fig:3}.
\end{exm}

\begin{exm}\label{exm:5}
As in \cite{Hills_et_al_1989}, we consider a Berino loamy fine sand, with parameters
\[
\theta_r=0.0286,\,\theta_S=0.3658,\,\alpha=0.0280,\,n=2.2390,\,K_S=0.0063\,\textrm{cm/s}.
\]
\begin{figure}
    \centering    \includegraphics[width=0.8\textwidth]{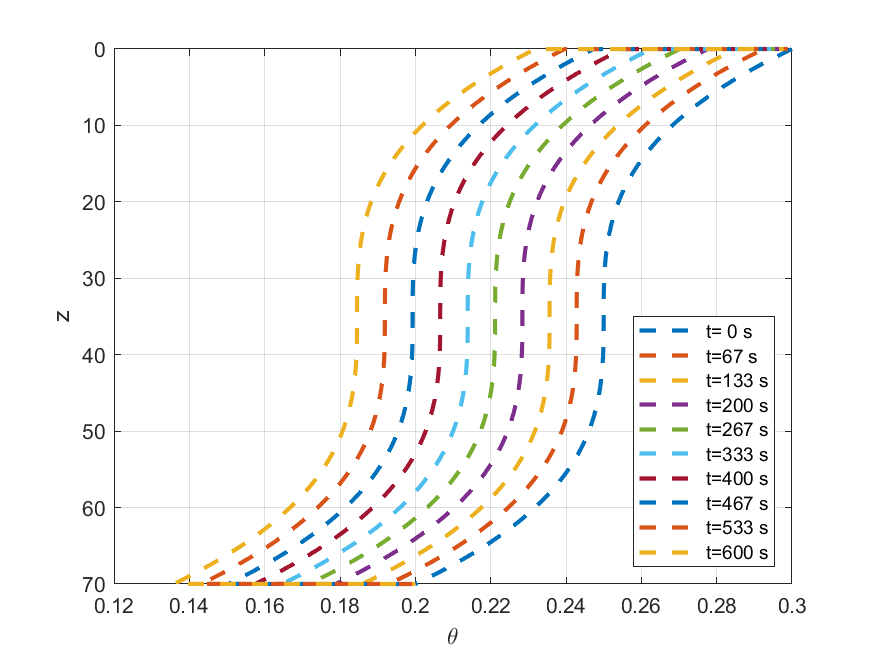}
    \caption{Numerical solution relative to Example \ref{exm:5}.}
    \label{fig:5}
\end{figure}
We added a sink term $S=-100\,\textrm{s}^{-1}$ and parameter $\delta=0.15$ in \eqref{eq:horizon}. We set our initial and boundary conditions as follows
\begin{align*}
\theta(0,t) &= 0.3\left(1-\frac{t}{T}\right)+0.29\frac{t}{T},\,\,t\in[0,T], \\
\theta(Z,t) &= 0.2,\,\,t\in[0,T],
\end{align*}
while initial condition has a nonlinear profile
\[
\theta(z,0)=0.05z^3+0.25,\,\,z\in[0,Z].
\]
We select $Z=70$ cm, $T=600$ s; moreover, we used $\Delta t=0.06$ s and $\Delta x=0.3$ cm. Results are shown in Figure \ref{fig:5}.
\end{exm}

\section{Conclusions and future works}\label{sec:conclusions}

Starting from an appropriate and physically based rewriting of Richards' equation using non-locality theory, we propose to compute its numerical solution using a semi-discretized time forward scheme based on Chebyshev transform. We prove that such approach converges in suitable Sobolev spaces, providing a theoretical background for further extensions of the present work to higher dimensional domains.
We also propose a new kind of convolutional kernel, or influence function, in order to manage the peridynamic behavior of the proposed model. Such an influence function distributes its effect on the domain so to correctly catch boundary conditions. In fact, we experienced major benefits from this choice, as numerical convergence turns out to be robust with respect to time, and compared to results coming from classical choices of influence functions. To testify our theoretical analysis, we have performed several experiments, on a wide range on soils, in \texttt{MATLAB}. We have considered different Dirichlet boundary conditions and linear and non-linear initial conditions and show that, with suitable discretization step-sizes, our method is reliable and accurate.

The present work suggests several possible directions for future and already ongoing research studies. For instance, it would be of interest applying Eulerian-Lagrangian methods (e.g. \cite{Abreu_et_al_JDDE_2022}) in the proposed peridynamic framework for Richards' equation, while the idea of introducing a basic control approach on the boundary conditions, as in \cite{Berardi_et_al_TiPM_2022}, could be adapted as well. Another development would consider non-local terms in time, so to resort to numerical solvers coming from specific tools in fractional differential calculus (see \cite{GarrappaPopolizio2011,DifonzoGarrappa2022}), or, even more promisingly, by spectral methods in 2D (see \cite{LPcheby2022,Bobaru2021,LP2021}).

\section*{Acknowledgments}
All authors are member of the INdAM Research group GNCS. MB and FVD also acknowledge the partial support of the 2022 project ``Modelli di evoluzione non locali: analisi, trattamento numerico e algoritmi" funded by GNCS-INdAM.
SFP acknowledges the partial support of ``Finanziamento giovani ricercatori 2022'' funded by GNCS-INdAM.
MB acknowledges the partial support of the CNR project ``MENTOR''. FVD has been supported by \textit{REFIN} Project, grant number 812E4967 funded by Regione Puglia. SFP has been supported by \textit{REFIN} Project, grant number D1AB726C funded by Regione Puglia.

\bibliographystyle{plain}

\bibliography{biblio}

\end{document}